\theoremstyle{plain}
\newtheorem{Cor}{Corollary}
\newtheorem{Thm}{Theorem}
\newcommand*{\Ann}{\ensuremath{\mathrm{Ann\,}}}
\newcommand*{\R}{\ensuremath{\mathbb{R}}}
\newcommand*{\Z}{\ensuremath{\mathbb{Z}}}
\newcommand*{\C}{\ensuremath{\mathbb{C}}}
\begin{document}
	
	\date{}
	
	\author{
		L\'aszl\'o Sz\'ekelyhidi\\
		{\small\it Institute of Mathematics, University of Debrecen,}\\
		{\small\rm e-mail: \tt szekely@science.unideb.hu,}
	}
	
	\title{New Results on Spectral Synthesis}

	\maketitle
	
	\begin{abstract}
		In our former paper we introduced the concept of localization of ideals in the Fourier algebra of a locally compact Abelian group. It turns out that localizability of a closed ideal in the Fourier algebra is equivalent to the synthesizability of the annihilator of that closed ideal which corresponds to this ideal in the measure algebra. This equivalence provides an effective tool to prove synthesizability of varieties on locally compact Abelian groups. In this paper we utilize this tool to show that when investigating synthesizability of a variety, roughly speaking compact elements of the group can be neglected. Our main result is that spectral synthesis holds on a locally compact Abelian group $G$ if and only if it holds on $G/B$, where $B$ is the closed subgroup of all compact elements. In particular, spectral synthesis holds on compact Abelian groups. Also we obtain a simple proof for the characterization theorem of spectral synthesis on discrete Abelian groups.
	\end{abstract}

	\footnotetext[1]{The research was supported by the  the
	Hungarian National Foundation for Scientific Research (OTKA),
	Grant No.\ K-134191.}\footnotetext[2]{Keywords and phrases:
	variety, spectral synthesis}\footnotetext[3]{AMS (2000) Subject Classification: 43A45, 22D99}
	
	\section{Introduction}
	Let $G$ be a locally compact Abelian group. Spectral synthesis deals with uniformly closed translation invariant linear spaces of continuous complex valued functions on $G$. Such a space is called a {\it variety}. We say that {\it spectral analysis} holds for a variety, if every nonzero subvariety contains a one dimensional subvariety. We say that a variety is {\it synthesizable}, if its finite dimensional subvarieties span a dense subspace in the variety. Finally, we say that {\it spectral synthesis} holds for a variety, if every subvariety of it is synthesizable. On commutative topological groups finite dimensional varieties of continuous functions are completely characterized: they are spanned by exponential monomials. {\it Exponential polynomials} on a topological Abelien group are defined as the elements of the complex algebra of continuous complex valued functions generated by all continuous homomorphisms into the multiplicative group of nonzero complex numbers ({\it exponentials}), and all continuous homomorphisms into the additive group of all complex numbers ({\it additive functions}).  An {\it exponential monomial} is a function of the form
	$$
	x\mapsto P\big(a_1(x),a_2(x),\dots,a_n(x)\big)m(x),
	$$
	where $P$ is a complex polynomial in $n$ variables, the $a_i$'s are additive functions, and $m$ is an exponential. Every exponential polynomial is a linear combination of exponential monomials. For more about spectral analysis and synthesis on groups see \cite{MR2680008,MR3185617}.
	\vskip.2cm
	
	In \cite{MR2340978}, the authors characterized those discrete Abelian groups having spectral synthesis: spectral synthesis holds for every variety on the discrete Abelian group $G$, if and only if $G$ has finite torsion free rank. In particular, from this result it follows, that if spectral synthesis holds on $G$ and $H$, then it holds on $G\oplus H$. Unfortunately, such a result does not hold in the non-discrete case. Namely, by the fundamental result of L.~Schwartz \cite{MR0023948}, spectral synthesis holds on $\R$, but D.~I.~Gurevich showed in \cite{MR0390759} that spectral synthesis fails to hold on $\R\times\R$. In this paper we prove that spectral synthesis holds on the locally compact Abelian group $G$ assuming that it holds on the factor group $G/B$, where $B$ is the closed subgroup of $G$ formed by all compact elements. We recall, that in a topological group an element is called {\it compact element}, if the closure of the cyclic subgroup generated by the element is compact. In a locally compact Abelian group all compact elements form a closed subgroup (see \cite[(9.10) Theorem]{MR0156915}).
	
\section{Localization}
\indent In our former paper \cite{Sze23} we introduced the concept of localization of ideals in the Fourier algebra of a locally compact Abelian group. We recall this concept here.
\vskip.2cm
Let $G$ be a locally compact Abelian group and let $\mathcal A(G)$ denote its Fourier algebra, that is, the algebra of all Fourier transforms of compactly supported complex Borel measures on $G$. This algebra is topologically isomorphic to the measure algebra $\mathcal M_c(G)$. For the sake of simplicity, if the annihilator $\Ann I$ of the closed ideal $I$ in $\mathcal M_c(G)$ is synthesizable, then we say that the corresponding closed ideal $\widehat{I}$ in $\mathcal A(G)$ is synthesizable. For each derivation $D$ on $\mathcal A(G)$, we introduced (see \cite{Sze23}) the set $\widehat{I}_{D,m}$ as the set of all functions $\widehat{\mu}$ in $\mathcal A(G)$ for which
$$
D\widehat{\mu}(m)=\int \Delta_{x,y_1,y_2,\dots,y_k}*f_{D,m}(0)\widecheck{m}(x)\,d\mu(x)=\widehat{\mu}(m)=0
$$
holds for each $k=1,2,\dots$ and $y_1,y_2,\dots,y_k$ in $G$. Then $\widehat{I}_{D,m}$ is a closed ideal in $\mathcal A(G)$. For a family $\mathcal D$ of derivations we write
$$
\widehat{I}_{\mathcal{D},m}=\bigcap_{D\in \mathcal D} \widehat{I}_{D,m}.
$$
Clearly, $\widehat{I}_{\mathcal{D},m}$ is a closed ideal as well. 

The dual concept is the following: given an ideal $\widehat{I}$ in $\mathcal A(G)$ and an exponential $m$, the set of all derivations on $\mathcal A(G)$ which annihilate $\widehat{I}$ at $m$ is denoted by $\mathcal{D}_{\widehat{I},m}$. The subset of $\mathcal{D}_{\widehat{I},m}$ consisting of all polynomial derivations is denoted by $\mathcal{P}_{\widehat{I},m}$. We have the basic inclusion
\begin{equation}\label{basic}
\widehat{I}\subseteq \bigcap_m \widehat{I}_{\mathcal{D}_{\widehat{I},m},m}\subseteq \bigcap_m \widehat{I}_{\mathcal{P}_{\widehat{I},m},m}.
\end{equation}
We note that if $m$ is not a root of $\widehat{I}$, then $\mathcal{D}_{\widehat{I},m}=\mathcal{P}_{\widehat{I},m}=\{0\}$, consequently $\widehat{I}_{\mathcal{D}_{\widehat{I},m},m}=\widehat{I}_{\mathcal{P}_{\widehat{I},m},m}=\mathcal A(G)$, hence those terms have no effect on the intersection. 
\vskip.2cm

The ideal $\widehat{I}$ is called {\it localizable}, if we have equalities in \eqref{basic}. Roughly speaking, localizability of an ideal means, that the ideal is completely determined by the values of "derivatives" of the functions belonging to this ideal. The main result in \cite{Sze23} is that $\widehat{I}$ is synthesizable if and only if it is localizable. We shall use this result in the subsequent paragraphs.

\section{The main result}

Our main result in this paper is the following.

\begin{Thm}\label{main}
	Let $G$ be a locally compact Abelian group and let $B$ denote the closed subgroup of $G$ consisting of all compact elements. Then spectral synthesis holds on $G$ if and only if it holds on $G/B$.
\end{Thm}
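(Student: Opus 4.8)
The plan is to reduce both implications to the localizability criterion of \cite{Sze23} and then to transport localizability across the quotient map $\pi\colon G\to G/B$, exploiting the fact that the ``polynomial direction'' of every exponential monomial already lives on $G/B$. First I would record the structural facts that drive everything. Every additive function $a$ on $G$ is a continuous homomorphism into $(\C,+)$, so it maps each compact subgroup $\overline{\langle b\rangle}$ onto a compact, hence trivial, subgroup of $(\C,+)$; thus $a$ vanishes on $B$, descends to an additive function on $G/B$, and conversely every additive function on $G/B$ lifts to one on $G$. In particular the polynomial factor $P(a_1,\dots,a_n)$ of any exponential monomial is $B$-invariant. Likewise every exponential $m$ restricts to a unitary character $\chi=m|_B$ of $B$, and the exponentials with $m|_B=1$ are exactly the lifts of the exponentials of $G/B$; since characters of closed subgroups extend, every $\chi$ is the restriction of some exponential $m_\ast$ of $G$.

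For the direction ``synthesis on $G$ implies synthesis on $G/B$'', I would pull back a variety $V$ on $G/B$ to the variety $\pi^\ast V$ of $B$-invariant functions on $G$, which is closed and translation invariant. By hypothesis $\pi^\ast V$ is synthesized by the exponential monomials it contains; each such monomial $P\cdot m$ is $B$-invariant, and since $P$ is already $B$-invariant this forces $m|_B=1$, so $P\cdot m$ descends to an exponential monomial of $G/B$ lying in $V$. Because $\pi^\ast$ is an isometric isomorphism onto $\pi^\ast V$, pushing the density statement down through $\pi$ shows that $V$ is synthesizable. This direction is routine once the structural facts above are in place.

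The substantial direction is ``synthesis on $G/B$ implies synthesis on $G$'', and here I would work entirely through localizability: by \cite{Sze23} it suffices to show that every closed ideal $\widehat I$ in $\mathcal A(G)$ is localizable, that is, that the reverse inclusion holds in \eqref{basic}. The key observation is that the localization data is genuinely local in the exponential $m$, and for each fixed $m$ the polynomial derivations in $\mathcal P_{\widehat I,m}$ are built out of additive functions, which by the structural facts depend only on $G/B$. I would therefore fix a root $m$ of $\widehat I$, set $\chi=m|_B$, and compare the $\chi$-twisted problem on $G$ with an associated problem on $G/B$: writing $m=m_\ast\,\bar m$ with $m_\ast$ a fixed exponential extending $\chi$ and $\bar m$ lifted from $G/B$, the assignment $P\cdot m\mapsto P\cdot \bar m$ identifies the finite-dimensional local structure at $m$ with that at $\bar m$, and correspondingly identifies $\mathcal P_{\widehat I,m}$ and the set $\widehat I_{\mathcal P_{\widehat I,m},m}$ with their counterparts on $G/B$. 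Localizability of the associated ideals on $G/B$, which holds by hypothesis, then yields the reverse inclusion for $\widehat I$; specializing to $G=B$ gives synthesis on compact groups as a sanity check.

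The main obstacle I anticipate is exactly this transport of localizability. The subgroup $B$ need not be compact (only each of its elements is), so $\widehat B$ is in general a continuum and the character parameter $\chi$ cannot be handled by a naive direct-sum decomposition of the variety; instead one must verify that the per-exponential localization condition on $G$ is faithfully matched by the family of conditions on $G/B$, uniformly in $\chi$, and that reassembling these local matches over all roots $m$ reproduces the global ideal $\widehat I$ and not a strictly larger one. Checking that the twist by $m_\ast$ interacts correctly with the integral defining $\widehat I_{D,m}$ --- so that the $B$-direction contributes only the harmless character $\chi$ and never additional, non-synthesizable polynomial behaviour --- is the technical heart of the argument.
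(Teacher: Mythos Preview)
Your outline and the paper's proof share the same backbone: the localizability criterion of \cite{Sze23} together with the structural fact that every additive function vanishes on $B$, so that the polynomial rings over $G$ and over $G/B$ are canonically identified. For the easy direction the paper simply remarks that synthesis passes to every continuous homomorphic image; your pull-back argument is an explicit unpacking of the same statement. For the hard direction, however, the paper takes a more direct route than your character twist. It introduces one algebra homomorphism $\widehat{\mu}\mapsto\widehat{\mu}_B$ from $\mathcal A(G)$ to $\mathcal A(G/B)$ (push-forward of measures along the natural map $\Phi$), sends $\widehat I$ to a closed ideal $\widehat I_B$, and verifies that the polynomial derivations in $\mathcal P_{\widehat I,\,m\circ\Phi}$ and in $\mathcal P_{\widehat I_B,\,m}$ correspond bijectively via $p\leftrightarrow p_B$. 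The contradiction is then run using only exponentials of the lifted form $M=m\circ\Phi$: a hypothetical $\widehat{\nu}\notin\widehat I$ sitting in the right-hand side of \eqref{basic} projects to $\widehat{\nu}_B$ sitting in the right-hand side of the analogue of \eqref{basic} for $\widehat I_B$, whence $\widehat{\nu}_B\in\widehat I_B$ by localizability on $G/B$, and this is declared the contradiction. Your decomposition $m=m_\ast\,\bar m$ indexed by $\chi=m|_B\in\widehat B$ is an additional layer the paper never introduces; it would buy you direct control at \emph{every} exponential of $G$, not only the $B$-trivial ones, but at the price of the reassembly problem you yourself flag as the main obstacle, whereas the paper's single-projection argument sidesteps that issue altogether.
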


\begin{proof}
	If spectral synthesis holds on $G$, then it obviously holds on every continuous homomorphic image of $G$, in particular, it holds on $G/B$.
	\vskip.2cm
	
	Conversely,  we assume that spectral synthesis holds on $G/B$. This means, that every closed ideal in the Fourier algebra of $G/B$ is localizable, and we need to show the same for all closed ideals of the Fourier algebra of $G$. 
	\vskip.2cm
	
	First we remark that the polynomial rings over $G$ and over $G/B$ can be identified. Indeed, polynomials on $G$ are built up from real characters on $G$, which clearly vanish on compact elements, as the topological group of the reals has no nontrivial compact subgroups. Consequently, if $a$ is a real character, and $x,y$ are in the same coset of $B$, then $x-y$ is in $B$, and $a(x-y)=0$, which means $a(x)=a(y)$. So, the real characters on $G$ arise from the real characters of $G/B$, hence the two polynomial rings can be identified. 
	\vskip.2cm
	
	Now we define a projection of the Fourier algebra of $G$ into the Fourier algebra of $G/B$ as follows. Let $\Phi:G\to G/B$ denote the natural mapping. For each measure $\mu$ in $\mathcal M_c(G)$ we define $\mu_B$ as the linear functional 
	$$
	\langle \mu_B,\varphi\rangle=\langle \mu, \varphi\circ \Phi\rangle
	$$
	whenever $\varphi:G/B\to\C$ is a continuous function. It is straightforward that the mapping $\widehat{\mu}\mapsto \widehat{\mu}_B$ is a continuous algebra homomorphism of the Fourier algebra of $G$ into the Fourier algebra of $G/B$. As $\Phi$ is an open mapping, closed ideals are mapped onto closed ideals. 
	\vskip.2cm
	
	For a given closed ideal $\widehat{I}$ in $\mathcal A(G)$, we denote by $\widehat{I}_B$ the closed ideal in $\mathcal A(G/B)$ which corresponds to $\widehat{I}$ under the above homomorphism. If $m$ is a root of the ideal $\widehat{I}_B$, then $\widehat{\mu}_B(m)=0$ for each $\widehat{\mu}$ in $\widehat{I}$. In other words,
	$$
\langle\mu,\widecheck{m}\circ \Phi\rangle= \langle{\mu}_B,\widecheck{m}\rangle=0,
	$$
	hence $m\circ \Phi$, which is clearly an exponential on $G$, is a root of $\widehat{I}$. Suppose that $D$ is a derivation in $\mathcal P_{\widehat{I},m\circ \Phi}$, then it has the form
	$$
	D\widehat{\mu}(m\circ \Phi)=\int p\cdot (\widecheck{m}\circ \Phi)\,d\mu
	$$
	with some polynomial $p$ on $G$ (see \cite{Sze23}). According to our remark above, the polynomial $p$ can uniquely be written  as $p_B\circ \Phi$, where $p_B$ is a polynomial on $G/B$. In other words,
	$$
	D\widehat{\mu}(m\circ \Phi)=\langle \widehat{\mu},(p_B\circ \Phi)(\widecheck{m}\circ \Phi)\rangle =\langle \widehat{\mu}_B,  p_B\widecheck{m}\rangle=D_B(\widehat{\mu}_B)(m),
	$$
	which defines a derivation $D_B$ on $\mathcal A(G/B)$ with generating function $f_{D_B,m}=p_B$ (see \cite{Sze23}).
	\vskip.2cm
	
	It follows, that every derivation in $\mathcal P_{\widehat{I},m\circ \Phi}$ arises from a derivation  in $\mathcal P_{\widehat{I}_B,m}$. On the other hand, if $d$ is a derivation in $\mathcal P_{\widehat{I}_B,m}$, then we have
	$$
	d\widehat{\mu}_B(m)=\int p \widecheck{m}\,d\mu_B=\langle \mu_B,p \widecheck{m}\rangle=\langle \mu, (p\circ \Phi) (\widecheck{m}\circ \Phi)\rangle=
	$$
	$$
	\int p(\Phi(x) (\widecheck{m}\circ \Phi)(x)\,d\mu(x),
	$$
	which defines a derivation $D$ in $\mathcal P_{\widehat{I},m\circ \Phi}$.
	\vskip.2cm
	
	We summarize our assertions. Let $\widehat{I}$ be a proper closed ideal in $\mathcal A(G)$ and assume that $\widehat{I}$ is non-localizable. It follows that there is a function $\widehat{\nu}$ not in $\widehat{I}$ which is annihilated at $M$ by all polynomial derivations in $\mathcal P_{\widehat{I},M}$, for each exponential $M$ on $G$. In particular,  $\widehat{\nu}$ is annihilated at $m\circ \Phi$ by all polynomial derivations in $\mathcal P_{\widehat{I},m\circ \Phi}$, for each exponential $m$ on $G/B$. We have seen above that this implies that $\widehat{\nu}_B$ is annihilated at $m$ by all polynomial derivations in $\mathcal P_{\widehat{I}_B,m}$ and for each exponential $m$ on $G/B$. As spectral synthesis holds on $G/B$, the ideal $\widehat{I}_B$ is localizable, hence $\widehat{\nu}_B$ is in $\widehat{I}_B$, but this contradicts the assumption that $\widehat{\nu}$ is not in $\widehat{I}$. The proof is complete.
\end{proof}

\begin{Cor}\label{compelem}
	If every element of a locally compact Abelian group is compact, then spectral synthesis holds on this group.
\end{Cor}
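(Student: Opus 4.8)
The plan is to deduce this statement as an immediate specialization of Theorem~\ref{main}. The key observation is that the hypothesis directly pins down the subgroup $B$. By definition, $B$ is the closed subgroup of $G$ consisting of all compact elements; so if \emph{every} element of $G$ is compact, then $B=G$. Consequently the factor group $G/B=G/G$ reduces to the trivial one-element group.

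The next step is to check that spectral synthesis holds, trivially, on the one-element group. The continuous complex valued functions on a one-point space are exactly the constants, so the entire function space is one-dimensional. In particular every variety is finite dimensional, hence its own finite dimensional subvarieties span a dense (indeed, the whole) subspace; thus every subvariety is synthesizable, and spectral synthesis holds on $G/B$.

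Finally, I would invoke Theorem~\ref{main}: since spectral synthesis holds on $G/B$, the equivalence stated there yields that spectral synthesis holds on $G$, which is exactly the assertion.

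As for difficulty, I expect essentially no obstacle: the corollary is a direct corollary of the main theorem, and the only points deserving a word of care are the identification $B=G$ straight from the definition of a compact element, and the (entirely routine) verification of spectral synthesis on the trivial quotient. No new ideas beyond those already contained in the proof of Theorem~\ref{main} should be required. I would also note in passing that this covers compact Abelian groups, since in a compact group the closure of any cyclic subgroup is a closed subset of a compact set and hence compact, so every element is a compact element.
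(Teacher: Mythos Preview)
Your proof is correct and follows essentially the same route as the paper: observe that $B=G$, so $G/B$ is the trivial group (the paper simply calls it ``a finite group''), note that spectral synthesis obviously holds there, and invoke Theorem~\ref{main}. The only difference is that you spell out why spectral synthesis holds on the one-point group, whereas the paper leaves this implicit.
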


\begin{proof}
	If every element of $G$ is compact, then $G=B$, and $G/B$ is a finite group.
\end{proof}

\begin{Cor}\label{comp}
Spectral synthesis holds on every compact Abelian group.
\end{Cor}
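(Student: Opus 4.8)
The plan is to deduce this immediately from Corollary~\ref{compelem}, so the single point to verify is that in a compact Abelian group \emph{every} element is a compact element. Recall that an element $x$ is compact exactly when the closure of the cyclic subgroup $\langle x\rangle$ that it generates is compact. When $G$ is itself compact, this closure is a closed subset of the compact space $G$ and is therefore compact; hence every $x \in G$ is a compact element.

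It follows that the closed subgroup $B$ of all compact elements coincides with $G$. Substituting this into Corollary~\ref{compelem}, whose hypothesis is precisely that every element of the group is compact, at once gives that spectral synthesis holds on $G$. Alternatively one could appeal to Theorem~\ref{main} directly: from $B = G$ the quotient $G/B$ is the one-element group, which is finite, and spectral synthesis holds there trivially since every variety is finite dimensional; the theorem then carries this conclusion back to $G$.

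I anticipate no real obstacle, as the whole content of the statement collapses into the identity $B = G$. The only step meriting a moment's attention is the claim that the closure of $\langle x\rangle$ is compact, which relies solely on the elementary fact that a closed subset of a compact space is compact. Once $B = G$ is established, the proof is nothing more than an invocation of the results already proved above.
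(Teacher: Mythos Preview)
Your proof is correct and follows exactly the same approach as the paper: both reduce to the observation that $G=B$ when $G$ is compact, and then invoke the preceding corollary (the paper's proof is simply the one line ``Again, we have $G=B$''). Your additional justification that closed subsets of compact spaces are compact, and your alternative appeal to Theorem~\ref{main}, only make explicit what the paper leaves implicit.
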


\begin{proof}
Again, we have $G=B$.
\end{proof}
	
\begin{Cor}\label{tors}
Spectral synthesis holds on a discrete Abelian group if and only if its torsion free rank is finite (see \cite{MR2340978, Sze23}).
\end{Cor}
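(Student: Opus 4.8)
The plan is to combine Theorem~\ref{main} with a direct analysis of torsion-free discrete groups. First I would identify the subgroup $B$ explicitly in the discrete setting. In a discrete group the cyclic subgroup generated by an element $x$ is already closed, and it is compact precisely when it is finite; hence $x$ is a compact element if and only if it has finite order. Therefore $B$ is exactly the torsion subgroup of $G$, the quotient $G/B$ is torsion-free, and its rank coincides with the torsion-free rank of $G$. By Theorem~\ref{main}, spectral synthesis holds on $G$ if and only if it holds on $G/B$, so the whole question reduces to the following statement: spectral synthesis holds on a torsion-free discrete Abelian group $H$ if and only if $H$ has finite rank.

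For the finite-rank direction I would exploit that the additive functions on $H$ form a finite-dimensional space. Indeed, if $H$ has rank $n$, then $H\otimes\mathbb{Q}\cong\mathbb{Q}^n$, and since the additive group $(\C,+)$ is a $\mathbb{Q}$-vector space, every additive function $H\to\C$ factors through $H\otimes\mathbb{Q}$; thus the space of additive functions is $n$-dimensional. Consequently the polynomial ring over $H$ is a polynomial algebra in finitely many variables, hence Noetherian. The localization criterion recalled in Section~2 then applies: with only finitely many independent additive functions the sets $\mathcal{P}_{\widehat{I},m}$ of polynomial derivations are controlled by finitely generated polynomial ideals at each exponential $m$, and a standard Hilbert-basis argument shows that the inclusions in \eqref{basic} are equalities, i.e. every closed ideal is localizable. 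By the main result of \cite{Sze23} this yields synthesizability of every variety, so spectral synthesis holds on $H$.

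For the infinite-rank direction I would argue by contraposition. If $H$ has infinite rank, then a maximal independent subset generates a free subgroup of countably infinite rank, and correspondingly the space of additive functions on $H$ is infinite-dimensional. Using infinitely many algebraically independent additive functions one constructs a variety that is not synthesizable, in the spirit of Gurevich's counterexample on $\R\oplus\R$; in the present language this amounts to exhibiting a closed ideal $\widehat{I}$ together with a function $\widehat{\nu}\notin\widehat{I}$ that is annihilated at every exponential $m$ by all derivations in $\mathcal{P}_{\widehat{I},m}$, witnessing the strictness of \eqref{basic} and hence non-localizability. I expect this construction to be the main obstacle: the positive finite-rank case is essentially formal once the finiteness of the variable set is in place, whereas the failure of synthesis for infinite rank requires an explicit, genuinely nontrivial non-synthesizable variety. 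Combining the two directions with the reduction above yields the stated equivalence.
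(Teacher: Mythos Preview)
Your reduction via Theorem~\ref{main} to the torsion-free quotient $G/B$, together with the identification of $B$ with the torsion subgroup in the discrete case, is exactly what the paper does. The two proofs diverge in how they handle each direction on the torsion-free group.

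For the finite-rank direction the paper does not redo any localization analysis at all: it asserts that $G/T$ is a homomorphic image of $\Z^n$ for some $n$, invokes the known fact that spectral synthesis holds on $\Z^n$, and concludes that it holds on every homomorphic image. Your plan instead reproves synthesis from scratch via a Noetherian argument on the polynomial ring. The idea that only finitely many additive functions are available is correct and is indeed the heart of the matter, but the sentence ``a standard Hilbert-basis argument shows that the inclusions in \eqref{basic} are equalities'' is where all the content hides; you have not actually established that finite generation of the polynomial ring forces every closed ideal of $\mathcal A(H)$ to be localizable, and that step is essentially the $\Z^n$ theorem you would be reproving.

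For the infinite-rank direction the paper's argument is much shorter and never leaves the derivation framework of \cite{Sze23}: infinite torsion-free rank guarantees the existence of a \emph{non-polynomial} derivation on the Fourier algebra, and the mere presence of such a derivation makes the second inclusion in \eqref{basic} strict for an appropriate $\widehat{I}$ and $m$, so localizability (hence synthesis) fails. You propose instead to construct an explicit Gurevich-style non-synthesizable variety and you yourself flag this as ``the main obstacle''; it is genuinely harder, and unnecessary once one uses the polynomial/non-polynomial derivation dichotomy already set up in Section~2.

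In short: same reduction, but in both directions the paper takes a shortcut through existing machinery (the $\Z^n$ result, respectively the existence of non-polynomial derivations from \cite{Sze23}), whereas your outline attempts direct arguments that are left substantially incomplete.
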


\begin{proof}
	If the torsion free rank of $G$ is infinite, then there is a non-polynomial derivation on its Fourier algebra, hence we have the chain of inclusions
	$$
	\widehat{I}\subseteq \widehat{I}_{\mathcal D_{{\widehat{I},m}},m}\subsetneq \widehat{I}_{\mathcal P_{{\widehat{I},m}},m},
	$$
	which implies that $\widehat{I}\ne \widehat{I}_{\mathcal P_{{\widehat{I},m}},m}$, hence $\widehat{I}$ is not synthesizable.
	\vskip.2cm
	
	Conversely, let $G$ have finite torsion free rank. The subgroup $B$ of compact elements coincides with the set $T$ of all elements of finite order, and $G/T$ is a (continuous) homomorphic image of $\Z^n$ with some nonnegative integer $n$. As spectral synthesis holds on $\Z^n$, it holds on its homomorphic images. 
\end{proof}

\section{Data availability statement}

Data sharing not applicable to this article as no datasets were generated or analysed during the current study.

	\end{document}